\documentclass[12pt]{article}
\oddsidemargin 0 mm
\topmargin -10 mm
\headheight 0 mm
\headsep 0 mm
\textheight 246.2 mm
\textwidth 159.2 mm
\footskip 9 mm
\setlength{\parindent}{0pt}
\setlength{\parskip}{5pt plus 2pt minus 1pt}
\pagestyle{plain}
\usepackage{amssymb}
\usepackage{amsthm}
\usepackage{amsmath}
\usepackage{graphicx}
\usepackage{enumerate}
\usepackage{pict2e}
\usepackage{framed}

\DeclareMathOperator{\Con}{Con}

\newtheorem{theorem}{Theorem}[section]
\newtheorem{definition}[theorem]{Definition}
\newtheorem{lemma}[theorem]{Lemma}
\newtheorem{proposition}[theorem]{Proposition}

\newtheorem{remark}[theorem]{Remark}
\newtheorem{example}[theorem]{Example}

\title{Implications in pseudocomplemented and Stone lattices}
\author{Ivan~Chajda and Helmut~L\"anger}
\date{}

\begin{document}

\footnotetext{Support of the research of the first author by IGA, project P\v rF~2024~011, and of the second author by the Austrian Science Fund (FWF), project I~4579-N, entitled ``The many facets of orthomodularity'', is gratefully acknowledged.}

\maketitle

\begin{abstract}
Two kinds of the connective implication are introduced as term operations of a pseudocomplemented lattice. It is shown that they share a lot of properties with the intuitionistic implication based on Heyting algebras. In particular, if the pseudocomplemented lattice in question is a Stone lattice then the considered implications satisfy some kind of quasi-commutativity, of the exchange property, some version of adjointness with the meet-operation and some kind of the derivation rule Modus Ponens and of the contraposition law. Two kinds of deductive systems are defined and their elementary properties are shown. All investigated concepts are illuminated by examples.
\end{abstract}

{\bf AMS Subject Classification:} 06D15, 06D20, 03B22, 03B60

{\bf Keywords:} Pseudocomplemented lattice, distributive lattice, Stone lattice, implication, Modus Ponens, deductive system

\section{Introduction}

Starting with L.~E.~J.~Brouwer \cite{B08}, \cite{B13} and A.~Heyting \cite H, intuitionistic logic is formalized by so-called Heyting algebras, i.e.\ bounded distributive relatively pseudocomplemented lattices where the connective implication $\rightarrow$ is realized by the relative pseudocomplement $*$ where $a*b$, the {\em relative pseudocomplement} of $a$ with respect to $b$, is the greatest element $x$ satisfying $a\wedge x\le b$. This means that the binary operations $\wedge$ and $*$ are connected via the {\em adjointness property}
\[
a\wedge x\le b\text{ if and only if }x\le a*b.
\]
In such a case the negation $a^*$ of $a$ is defined by $a^*:=a*0$. The element $a^*$ is called the {\em pseudocomplement} of $a$.

The question arises if some kind of implication can be derived by means of pseudocomplementation. This means that if $\mathbf L=(L,\vee,\wedge,{}^*,0,1)$ is a bounded distributive lattice with pseudocomplementation and an implication is introduced as a term of $\mathbf L$, may such an implication have similar nice properties as in the case of relative pseudocomplementation? For example, does it satisfy the derivation rule Modus Ponens or the contraposition law? The aim of the present paper is to show how such an implication can be introduced and which restrictions are necessary in order to obtain a formalization of an implication similar to that considered in intuitionistic logic.

\section{Preliminaries}

In the whole paper we work with a bounded lattice $\mathbf L=(L,\vee,\wedge,0,1)$ which is {\em pseudocomplemented}, i.e.\ for each $a\in L$ there exists a greatest element $a^*\in L$ satisfying $a\wedge a^*=0$. This means that for all $x\in L$ we have
\[
a\wedge x=0\text{ if and only if }x\le a^*.
\]
This element $a^*$ is called the {\em pseudocomplement} of $a$. Such lattices were introduced by G.~Gr\"atzer and E.~T.~Schmidt in \cite{GS}. It is almost evident that every finite distributive lattice is pseudocomplemented. A distributive pseudocomplemented lattice is called a {\em Brouwerian lattice}. However, there are also non-distributive pseudocomplemented lattices, see Example~\ref{ex1} below. If $\mathbf L$ is pseudocomplemented then it will be denoted by $(L,\vee,\wedge,{}^*,0,1)$. Let us recall several familiarly known properties of pseudocomplementation.

\begin{lemma}\label{lem3}
	Let $(L,\vee,\wedge,{}^*,0,1)$ be a pseudocomplemented lattice and $a,b\in L$. Then the following holds:
\begin{enumerate}[{\rm(i)}]
	\item $0^*=1$, $1^*=0$, $a\le a^{**}$, $a^{***}=a^*$,
	\item $a\le b$ implies $b^*\le a^*$,
	\item $(a\vee b)^*=a^*\wedge b^*$,
	\item $(a\wedge b)^{**}=a^{**}\wedge b^{**}$,
	\item $a\wedge(a^*\wedge b)^*=a$.
\end{enumerate}
{\rm(}See e.g.\ {\rm\cite{BH}}, {\rm\cite{CHK}}, {\rm\cite F} and {\rm\cite G.)}
\end{lemma}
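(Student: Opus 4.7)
The plan is to unfold the defining equivalence $a\wedge x=0$ iff $x\le a^*$ and apply it mechanically; nearly every step is a one-line manipulation.

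Parts (i) and (ii) amount to bookkeeping. In (i), $0^*=1$ follows from $0\wedge 1=0$, while $1^*=0$ is immediate because $1^*=1\wedge 1^*=0$; the inequality $a\le a^{**}$ comes from reading $a^*\wedge a=0$ as $a\le(a^*)^*$; replacing $a$ by $a^*$ then gives $a^*\le a^{***}$, and the opposite inequality follows from (ii) applied to $a\le a^{**}$. Statement (ii) itself is obtained directly: from $a\le b$ and $b\wedge b^*=0$ one deduces $a\wedge b^*=0$, hence $b^*\le a^*$. I would prove (ii) first and reuse it throughout.

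For (iii), the inequality $(a\vee b)^*\le a^*\wedge b^*$ is immediate from (ii). For the reverse, the key observation is that associativity of $\wedge$ yields $a\wedge(a^*\wedge b^*)=(a\wedge a^*)\wedge b^*=0$ and symmetrically for $b$, so both $a$ and $b$ lie below $(a^*\wedge b^*)^*$, hence so does $a\vee b$; pseudocomplementation then rephrases this as $a^*\wedge b^*\le(a\vee b)^*$. Notably, no distributivity is used. Part (v) uses the same associativity trick: $a\wedge a^*\wedge b=0$ gives $a\le(a^*\wedge b)^*$, from which the equality follows.

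The main obstacle is (iv). One direction, $(a\wedge b)^{**}\le a^{**}\wedge b^{**}$, is once more monotonicity. For the reverse, I would set $x:=a^{**}\wedge b^{**}\wedge(a\wedge b)^*$ and show $x=0$. Since $x\le(a\wedge b)^*$, associativity gives $(x\wedge a)\wedge b=0$, so $x\wedge a\le b^*$; combined with $x\le b^{**}$ this forces $x\wedge a\le b^*\wedge b^{**}=0$. Hence $x\le a^*$, and together with $x\le a^{**}$ this yields $x\le a^*\wedge a^{**}=0$. The delicate point is that the equivalence $p\wedge q=0$ iff $p\le q^*$ must be invoked twice --- absorbing $a$ first and $b$ second --- so that the doubly starred factors $a^{**}$ and $b^{**}$ interact with $(a\wedge b)^*$ in the right way.
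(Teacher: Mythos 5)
Your proof is correct and follows essentially the same route as the paper: the easy inclusions by antitonicity, the reverse inclusion in (iii) and (v) via the symmetry of the disjointness relation $x\wedge y=0\Leftrightarrow x\le y^*\Leftrightarrow y\le x^*$, and the hard direction of (iv) by successively upgrading $a$ to $a^{**}$ and $b$ to $b^{**}$ against $(a\wedge b)^*$. The paper phrases (iv) as a chain of equivalences while you show the meet $a^{**}\wedge b^{**}\wedge(a\wedge b)^*$ is $0$ directly, but the underlying computation is identical.
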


For the convenience of the reader we provide a proof.

\begin{proof}[Proof of Lemma~\ref{lem3}]
\
\begin{enumerate}[(i)]
\item and (ii) are easy.
\item[(iii)] We have $(a\vee b)^*\le a^*,b^*$ and hence $(a\vee b)^*\le a^*\wedge b^*$. On the other hand $a\le a^{**}\le(a^*\wedge b^*)^*$ and similarly $b\le(a^*\wedge b^*)^*$ which implies $a\vee b\le(a^*\wedge b^*)^*$ and therefore $a^*\wedge b^*\le(a^*\wedge b^*)^{**}\le(a\vee b)^*$.
\item[(iv)] Since $(a\wedge b)^{**}\le a^{**},b^{**}$ we have $(a\wedge b)^{**}\le a^{**}\wedge b^{**}$. Now the following are equivalent:
\begin{align*}
   	          a\wedge b\wedge(a\wedge b)^* & =0, \\
	                  b\wedge(a\wedge b)^* & \le a^*, \\
	     a^{**}\wedge b\wedge(a\wedge b)^* & =0, \\
	             a^{**}\wedge(a\wedge b)^* & \le b^*, \\
	a^{**}\wedge b^{**}\wedge(a\wedge b)^* & =0, \\
	                   a^{**}\wedge b^{**} & \le(a\wedge b)^{**}.
\end{align*}
\item[(v)] We have $a\le a^{**}\le(a^*\wedge b)^*$.
\end{enumerate}
\end{proof}

Further, a distributive pseudocomplemented lattice is called a {\em Stone lattice} if it satisfies the {\em Stone identity}
\[
x^*\vee x^{**}\approx1.
\]

\begin{lemma}\label{lem6}
	Let $(L,\vee,\wedge,{}^*,0,1)$ be a Stone lattice and $a,b\in L$. Then the following holds:
	\begin{enumerate}[{\rm(i)}]
	\item $(a\vee b)^{**}=a^{**}\vee b^{**}$,
	\item $(a\wedge b)^*=a^*\vee b^*$.
	\end{enumerate}
\end{lemma}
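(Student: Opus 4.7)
The plan is to prove (i) first and then deduce (ii) from it by manipulating the identities already collected in Lemma~\ref{lem3}. For (i), I would rewrite the left-hand side via Lemma~\ref{lem3}(iii) as $(a\vee b)^{**}=(a^*\wedge b^*)^*$ and then establish equality with $a^{**}\vee b^{**}$ by two inequalities. The direction $a^{**}\vee b^{**}\le(a^*\wedge b^*)^*$ is automatic: since $a^*\wedge b^*\le a^*$, Lemma~\ref{lem3}(ii) gives $a^{**}\le(a^*\wedge b^*)^*$, and symmetrically $b^{**}\le(a^*\wedge b^*)^*$.

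The reverse inequality is where the Stone identity is actually used, and this is the one step I expect to be any kind of obstacle. My plan is to apply $x^*\vee x^{**}\approx1$ to both $a$ and $b$ and, using distributivity of the Stone lattice, to expand
\[
1=(a^*\vee a^{**})\wedge(b^*\vee b^{**})=(a^*\wedge b^*)\vee(a^*\wedge b^{**})\vee(a^{**}\wedge b^*)\vee(a^{**}\wedge b^{**}).
\]
Meeting both sides with $(a^*\wedge b^*)^*$ returns $(a^*\wedge b^*)^*$ on the left; distributing the meet over the join on the right kills the first summand, since $(a^*\wedge b^*)\wedge(a^*\wedge b^*)^*=0$, and each of the remaining three summands lies below $a^{**}\vee b^{**}$ (each is a meet containing $a^{**}$ or $b^{**}$). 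This yields $(a^*\wedge b^*)^*\le a^{**}\vee b^{**}$ and completes~(i).

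For (ii), I would not need the Stone identity again; the identity is encoded in part~(i), and everything reduces to a chain of equalities. Lemma~\ref{lem3}(i),(iv) give $(a\wedge b)^*=(a\wedge b)^{***}=(a^{**}\wedge b^{**})^*$. Applying Lemma~\ref{lem3}(iii) to $a^*,b^*$ gives $(a^*\vee b^*)^*=a^{**}\wedge b^{**}$, so $(a^{**}\wedge b^{**})^*=(a^*\vee b^*)^{**}$. Finally, part~(i) applied to $a^*,b^*$, combined with $a^{***}=a^*$ from Lemma~\ref{lem3}(i), gives $(a^*\vee b^*)^{**}=a^{***}\vee b^{***}=a^*\vee b^*$. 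Composing these equalities yields $(a\wedge b)^*=a^*\vee b^*$, which is~(ii).
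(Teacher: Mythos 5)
Your proof is correct, but it takes a genuinely different route from the paper's. You prove (i) first by a direct two-inequality argument: the easy inequality $a^{**}\vee b^{**}\le(a^*\wedge b^*)^*$ comes from antitonicity, and the hard one from expanding $1=(a^*\vee a^{**})\wedge(b^*\vee b^{**})$ by distributivity and meeting with $(a^*\wedge b^*)^*$ to kill the $a^*\wedge b^*$ summand; you then deduce (ii) from (i) by the chain $(a\wedge b)^*=(a^{**}\wedge b^{**})^*=(a^*\vee b^*)^{**}=a^*\vee b^*$. The paper works in the opposite order: it first proves (ii), and its key tool is an auxiliary lemma on bounded distributive lattices (Lemma~\ref{lem7}) stating that mutually complemented elements are pseudocomplements of each other and that complements pass to joins and meets. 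Since the Stone identity makes $a^*$ and $a^{**}$ complements, that lemma identifies $a^*\vee b^*$ as the complement, hence the pseudocomplement, of $a^{**}\wedge b^{**}$, giving $(a^*\vee b^*)^{**}=a^*\vee b^*$ at once; then (i) follows from (ii) via $(a\vee b)^{**}=(a^*\wedge b^*)^*=a^{**}\vee b^{**}$. Your version is more self-contained and computational, needing only Lemma~\ref{lem3} plus distributivity and bypassing Lemma~\ref{lem7} entirely; the paper's version is more conceptual, isolating the complementation principle that explains \emph{why} the skeleton elements behave like a Boolean algebra. Both use the Stone identity exactly once, at the essential step.
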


For the convenience of the reader we provide a proof. First we need an additional lemma.

\begin{lemma}\label{lem7}
	Let $(L,\vee,\wedge,0,1)$ be a bounded distributive lattice and $a,b,c,d\in L$. Then the following holds:
	\begin{enumerate}[{\rm(i)}]
		\item If $a$ and $b$ are complemented to each other then $a$ and $b$ are pseudocomplements of each other.
		\item If $a$ and $c$ are complemented to each other and $b$ and $d$ are complemented to each other then $a\vee b$ and $c\wedge d$ are complemented to each other and $a\wedge b$ and $c\vee b$ are complemented to each other.
	\end{enumerate}
\end{lemma}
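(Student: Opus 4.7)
My plan is to exploit distributivity and the definition of pseudocomplement; both parts are short, essentially folklore computations.

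For (i), suppose $a\wedge b=0$ and $a\vee b=1$. From $a\wedge b=0$ I immediately get $b\le a^*$ (and symmetrically $a\le b^*$) by the definition of pseudocomplement. For the reverse inequality, take any $x\in L$ with $a\wedge x=0$; using $a\vee b=1$ and distributivity,
\[
x=x\wedge 1=x\wedge(a\vee b)=(x\wedge a)\vee(x\wedge b)=0\vee(x\wedge b)\le b.
\]
Hence $b$ really is the greatest element whose meet with $a$ is $0$, i.e.\ $b=a^*$; symmetrically $a=b^*$, so $a$ and $b$ are pseudocomplements of each other.

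For (ii), I read ``$c\vee b$'' as a typo for ``$c\vee d$'' (the statement is symmetric in shape to the first half). Assume $a\wedge c=b\wedge d=0$ and $a\vee c=b\vee d=1$. Distributing the meet,
\[
(a\vee b)\wedge(c\wedge d)=(a\wedge c\wedge d)\vee(b\wedge c\wedge d)\le 0\vee 0=0,
\]
and distributing the join,
\[
(a\vee b)\vee(c\wedge d)=(a\vee b\vee c)\wedge(a\vee b\vee d)=1\wedge 1=1,
\]
since $a\vee c=1$ forces the first factor to be $1$ and $b\vee d=1$ forces the second. So $a\vee b$ and $c\wedge d$ are complements of each other. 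The complementary pair $a\wedge b$ and $c\vee d$ is handled by the dual computation, distributing $\wedge$ over $\vee$ and vice versa.

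There is really no obstacle of substance: part (i) is a standard one-line argument once one thinks to meet $x$ with $1=a\vee b$, and part (ii) is a routine application of distributivity. The only mild care needed is in recognizing that part (i) is not tautological---one must check that $b$ is the \emph{greatest} element with $a\wedge b=0$, which is where distributivity enters---and in reading through the (apparent) typo in the statement of part (ii).
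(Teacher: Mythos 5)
Your proof is correct and follows essentially the same route as the paper's: part (i) by meeting an arbitrary $x$ with $a\wedge x=0$ against $1=a\vee b$ to get $x\le b$, and part (ii) by the two distributivity computations (the paper also silently treats ``$c\vee b$'' as ``$c\vee d$''). The only cosmetic point is that your opening remark ``$b\le a^*$ by the definition of pseudocomplement'' presupposes that $a^*$ exists, which is not given in a bare bounded distributive lattice; but this sentence is not needed, since your main argument already shows $b$ is the greatest element meeting $a$ to $0$ and hence that $a^*$ exists and equals $b$.
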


\begin{proof}[Proof of Lemma~\ref{lem7}]
	\
	\begin{enumerate}[(i)]
	\item If $a\wedge c=0$ then because of $a\vee b=1$ we have $c=c\wedge(a\vee b)=(c\wedge a)\vee(c\wedge b)=c\wedge b\le b$. If, conversely, $c\le b$ then $a\wedge c\le a\wedge b=0$.
	\item We have
	\begin{align*}
 		  (a\vee b)\vee(c\wedge d) & =(a\vee b\vee c)\wedge(b\vee c\vee d)=1\wedge1=1, \\
		(a\vee b)\wedge(c\wedge d) & =(a\wedge c\wedge d)\vee(b\wedge c\wedge d)=0\vee0=0.
	\end{align*}
	The second assertion can be proved in a similar way.
	\end{enumerate}
\end{proof}

\begin{proof}[Proof of Lemma~\ref{lem6}]
	\
	\begin{enumerate}
		\item[(ii)] Using Lemma~\ref{lem7} we see that since $a^*$ and $a^{**}$ are complemented to each other and $b^*$ and $b^{**}$ are complemented to each other, also $a^*\vee b^*$ and $a^{**}\wedge b^{**}$ are complemented to each other and hence $a^*\vee b^*=(a^{**}\wedge b^{**})^*$ which implies $(a^*\vee b^*)^{**}=a^*\vee b^*$. Using (i), (iv) and (iii) of Lemma~\ref{lem3} we conclude
		\[
		(a\wedge b)^*=(a\wedge b)^{***}=(a^{**}\wedge b^{**})^*=(a^*\vee b^*)^{**}=a^*\vee b^*.
		\]
		\item[(i)] Using (iii) of Lemma~\ref{lem3} and (ii) we obtain $(a\vee b)^{**}=(a^*\wedge b^*)^*=a^{**}\vee b^{**}$.
	\end{enumerate}
\end{proof}

Hence Stone lattices satisfy the identities $(x\vee y)^*\approx x^*\wedge y^*$ and $(x\wedge y)^*\approx x^*\vee y^*$.

An {\em element} $a\in L$ is called {\em dense} if $a^*=0$. Let $D(\mathbf L)$ denote the set of all dense elements of $\mathbf L$.

\begin{example}\label{ex1}
	Consider the pseudocomplemented lattices visualized in Figure~1:
	
\vspace*{-3mm}

\begin{center}
	\setlength{\unitlength}{7mm}
	\begin{picture}(22,8)
		\put(2,1){\circle*{.3}}
		\put(1,4){\circle*{.3}}
		\put(3,3){\circle*{.3}}
		\put(3,5){\circle*{.3}}
		\put(2,7){\circle*{.3}}
		\put(2,1){\line(-1,3)1}
		\put(2,1){\line(1,2)1}
		\put(3,3){\line(0,1)2}
		\put(2,7){\line(-1,-3)1}
		\put(2,7){\line(1,-2)1}
		\put(1.85,.3){$0$}
		\put(.35,3.85){$b$}
		\put(3.4,2.85){$a$}
		\put(3.4,4.85){$c$}
		\put(1.85,7.4){$1$}
		\put(1.6,-.75){{\rm(a)}}
		\put(9,1){\circle*{.3}}
		\put(7,3){\circle*{.3}}
		\put(11,3){\circle*{.3}}
		\put(9,5){\circle*{.3}}
		\put(13,5){\circle*{.3}}
		\put(11,7){\circle*{.3}}
		\put(9,1){\line(-1,1)2}
		\put(9,1){\line(1,1)4}
		\put(11,3){\line(-1,1)2}
		\put(11,7){\line(-1,-1)4}
		\put(11,7){\line(1,-1)2}
		\put(8.85,.3){$0$}
		\put(6.35,2.85){$a$}
		\put(11.4,2.85){$b$}
		\put(8.35,4.85){$c$}
		\put(13.4,4.85){$d$}
		\put(10.85,7.4){$1$}
		\put(8.6,-.75){{\rm(b)}}
		\put(19,1){\circle*{.3}}
		\put(17,3){\circle*{.3}}
		\put(21,3){\circle*{.3}}
		\put(19,5){\circle*{.3}}
		\put(19,7){\circle*{.3}}
		\put(19,1){\line(-1,1)2}
		\put(19,1){\line(1,1)2}
		\put(19,5){\line(-1,-1)2}
		\put(19,5){\line(1,-1)2}
		\put(19,5){\line(0,1)2}
		\put(18.85,.3){$0$}
		\put(16.35,2.85){$a$}
		\put(21.4,2.85){$b$}
		\put(19.4,4.85){$c$}
		\put(18.85,7.4){$1$}
		\put(18.6,-.75){{\rm(c)}}
		\put(8.2,-1.75){Fig.\ 1}
		\put(5.6,-2.75){Pseudocomplemented lattices}
	\end{picture}
\end{center}

\vspace*{20mm}

For these three lattices we have
\[
\begin{array}{l|lllll}
	x            & 0 & a & b & c & 1 \\
	\hline
	x^*          & 1 & b & c & b & 0 \\
	\hline
	x^{**}       & 0 & c & b & c & 1 \\
\end{array}
\quad
\begin{array}{l|llllll}
	x            & 0 & a & b & c & d & 1 \\
	\hline
	x^*          & 1 & d & a & 0 & a & 0 \\
	\hline
	x^{**}       & 0 & a & d & 1 & d & 1
\end{array}
\quad
\begin{array}{l|llllll}
	x            & 0 & a & b & c & 1 \\
	\hline
	x^*          & 1 & b & a & 0 & 0 \\
	\hline
	x^{**}       & 0 & a & b & 1 & 1 \\
\end{array}
\]
\hspace*{17mm} $D(\mathbf L)=\{1\}$\hspace*{16mm} $D(\mathbf L)=\{c,1\}$\hspace*{19mm} $D(\mathbf L)=\{c,1\}$

\vspace*{3mm}

\hspace*{32mm} {\rm(a)}\hspace*{38mm} {\rm(b)}\hspace*{36mm} {\rm(c)}

\vspace*{4mm}

The lattices in {\rm(b)} and {\rm(c)} are distributive whereas the lattice in {\rm(a)} is not. The lattice in {\rm(a)} is not a Stone lattice though it satisfies the Stone identity since it is not distributive, the lattice in {\rm(b)} is a Stone lattice, but the lattice in {\rm(c)} is not a Stone lattice since $a^*\vee a^{**}=b\vee a=c\ne1$.
\end{example}

\begin{lemma}
	Let $\mathbf L=(L,\vee,\wedge,{}^*,0,1)$ be a distributive pseudocomplemented lattice and $a\in L$. Then $a\vee a^*\in D(\mathbf L)$.
\end{lemma}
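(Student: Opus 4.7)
The plan is to compute $(a\vee a^*)^*$ directly and show it equals $0$, which by definition means $a\vee a^*\in D(\mathbf L)$.

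First, I would invoke Lemma~\ref{lem3}(iii), the De Morgan-type identity $(x\vee y)^*=x^*\wedge y^*$, applied with $y:=a^*$. This immediately gives
\[
(a\vee a^*)^*=a^*\wedge a^{**}.
\]
Next, I would observe that $a^{**}$ is by definition the pseudocomplement of $a^*$, so the defining equation of pseudocomplementation yields $a^*\wedge a^{**}=0$. Combining the two equalities gives $(a\vee a^*)^*=0$, i.e.\ $a\vee a^*\in D(\mathbf L)$.

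There is essentially no obstacle: the whole argument is a two-line appeal to (iii) of Lemma~\ref{lem3} and the definition of the pseudocomplement. In fact distributivity is not really used, since Lemma~\ref{lem3}(iii) is available in every pseudocomplemented lattice; the hypothesis of distributivity is presumably included only because the lemma is being stated in the context in which Brouwerian lattices are the intended class.
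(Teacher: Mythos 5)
Your proof is correct and is essentially identical to the paper's: the authors likewise compute $(a\vee a^*)^*=a^*\wedge a^{**}=0$ by appealing to Lemma~\ref{lem3}. Your side remark that distributivity is not actually needed is also accurate, since part (iii) of that lemma holds in any pseudocomplemented lattice.
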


\begin{proof}
According to Lemma~\ref{lem3} we have $(a\vee a^*)^*=a^*\wedge a^{**}=0$.
\end{proof}

\begin{remark}
	It is worth noticing that the Stone identity does not imply distributivity. Namely, the lattice in Fig.~1 {\rm(a)} satisfies the Stone identity, but it is not distributive.
\end{remark}

\section{An implication in pseudocomplemented lattices}

On a pseudocomplemented lattice $(L,\vee,\wedge,{}^*,0,1)$ we define a binary term operation $\rightarrow$ by
\[
x\rightarrow y:=x^*\vee y
\]
for all $x,y\in L$. Obviously,
\begin{itemize}
	\item $x^{**}\rightarrow y\approx x\rightarrow y\approx x\rightarrow(x\rightarrow y)$,
	\item $y\le x\rightarrow y$ for all $x,y\in L$,
	\item $x\rightarrow0\approx x^*$.
\end{itemize}

\begin{example}
	The operation tables of $\rightarrow$ for the lattices from Figure~1 are as follows:
	\[
	\begin{array}{l|lllll}
		\rightarrow & 0 & a & b & c & 1 \\
		\hline
		0           & 1 & 1 & 1 & 1 & 1 \\
		a           & b & 1 & b & 1 & 1 \\
		b           & c & c & 1 & c & 1 \\
		c           & b & 1 & b & 1 & 1 \\
		1           & 0 & a & b & c & 1
	\end{array}
	\quad
	\begin{array}{l|llllll}
		\rightarrow & 0 & a & b & c & d & 1 \\
		\hline
		0           & 1 & 1 & 1 & 1 & 1 & 1 \\
		a           & d & 1 & d & 1 & d & 1 \\
		b           & a & a & c & c & 1 & 1 \\
		c           & 0 & a & b & c & d & 1 \\
		d           & a & a & c & c & 1 & 1 \\
		1           & 0 & a & b & c & d & 1
	\end{array}
	\quad
	\begin{array}{l|lllll}
		\rightarrow & 0 & a & b & c & 1 \\
		\hline
		0           & 1 & 1 & 1 & 1 & 1 \\
		a           & b & c & b & c & 1 \\
		b           & a & a & c & c & 1 \\
		c           & 0 & a & b & c & 1 \\
		1           & 0 & a & b & c & 1
	\end{array}
	\]
\hspace*{34mm}{\rm(a)}\hspace*{38mm}{\rm(b)}\hspace*{37mm}{\rm(c)}
\end{example}

\begin{remark}
	If $(L,\vee,\wedge,{}^*,0,1)$ is a pseudocomplemented lattice and $a\in L$ then one can easily check the following:
\[
\begin{array}{l|lllll}
	\rightarrow & 0      & a         & a^*            & a^{**}         & 1 \\
	\hline
	0           & 1      & 1         & 1              & 1              & 1 \\
	a           & a^*    & a\vee a^* & a^*            & a^*\vee a^{**} & 1 \\
	a^*         & a^{**} & a^{**}    & a^*\vee a^{**} & a^{**}         & 1 \\
	a^{**}      & a^*    & a\vee a^* & a^*            & a^*\vee a^{**} & 1 \\
	1           & 0      & a         & a^*            & a^{**}         & 1
\end{array} 
\]
\end{remark}

We are going to list several elementary properties of this implication including the so-called exchange property.

\begin{lemma}\label{lem4}
	Let $\mathbf L=(L,\vee,\wedge,{}^*,0,1)$ be a pseudocomplemented lattice and $a,b,c\in L$. Then the following holds:
	\begin{enumerate}[{\rm(i)}]
		\item If $a\le b$ then $a\rightarrow b\in D(\mathbf L)$,
		\item $(a\rightarrow b)\vee c=a\rightarrow(b\vee c)=(a\rightarrow c)\vee b$,
		\item $a\rightarrow(b\rightarrow c)=(a\rightarrow c)\vee(b\rightarrow c)=b\rightarrow(a\rightarrow c)$ {\rm(}{\em exchange property}{\rm)},
		\item $a\rightarrow(b\rightarrow a)\in D(\mathbf L)$,
		\item if $a\le b$ then $c\rightarrow a\le c\rightarrow b$ and $b\rightarrow c\le a\rightarrow c$.
	\end{enumerate}
\end{lemma}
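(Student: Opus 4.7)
The plan is to unfold the definition $x\rightarrow y = x^*\vee y$ throughout and reduce everything to the basic properties collected in Lemma~\ref{lem3}. Each item boils down to a short calculation.

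For (i), I would compute $(a\rightarrow b)^* = (a^*\vee b)^* = a^{**}\wedge b^*$ by Lemma~\ref{lem3}~(iii), and then use $a\le b\Rightarrow b^*\le a^*$ (Lemma~\ref{lem3}~(ii)) together with $a^*\wedge a^{**}=0$ to conclude $(a\rightarrow b)^*=0$. For (ii), the identities are immediate from associativity and commutativity of $\vee$: both sides expand to $a^*\vee b\vee c$. For (iii), the same observation applies: $a\rightarrow(b\rightarrow c) = a^*\vee b^*\vee c$, which equals $b\rightarrow(a\rightarrow c)$ by symmetry and equals $(a^*\vee c)\vee(b^*\vee c) = (a\rightarrow c)\vee(b\rightarrow c)$ by the idempotency of $\vee$.

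For (iv), I would write $a\rightarrow(b\rightarrow a) = a^*\vee b^*\vee a$ and apply Lemma~\ref{lem3}~(iii) twice to obtain its pseudocomplement $a^{**}\wedge b^{**}\wedge a^* = (a^*\wedge a^{**})\wedge b^{**} = 0$, since $a^*\wedge a^{**}=0$. For (v), $a\le b$ gives $c^*\vee a\le c^*\vee b$, i.e.\ $c\rightarrow a\le c\rightarrow b$, and it gives $b^*\le a^*$ (Lemma~\ref{lem3}~(ii)), hence $b^*\vee c\le a^*\vee c$, i.e.\ $b\rightarrow c\le a\rightarrow c$.

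There is no real obstacle here; the entire lemma is a direct consequence of the definition together with Lemma~\ref{lem3}. The only point that deserves a moment's attention is (iv), where one might be tempted to invoke distributivity or the Stone identity; however, the key fact that $a\vee a^*$ is always dense in any pseudocomplemented lattice follows from $(a\vee a^*)^* = a^{**}\wedge a^* = 0$, and this already suffices.
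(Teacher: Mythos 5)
Your proposal is correct and follows essentially the same route as the paper: unfold $x\rightarrow y=x^*\vee y$ and reduce each item to Lemma~\ref{lem3}, with (ii), (iii) and (v) being identical lattice-theoretic one-liners. The only cosmetic difference is in (iv), where the paper deduces it from (i) via $a\le b\rightarrow a$ while you recompute the pseudocomplement directly as $a^{**}\wedge b^{**}\wedge a^*=0$; both are equally valid.
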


\begin{proof}
	\
	\begin{enumerate}[(i)]
		\item If $a\le b$ then $(a\rightarrow b)^*=(a^*\vee b)^*\le a^{**},b^*$ and hence $(a\rightarrow b)^*\le a^{**},a^*$ which implies $(a\rightarrow b)^*\le a^{**}\wedge a^*=0$, i.e.\ $a\rightarrow b\in D$.
		\item We have $(a\rightarrow b)\vee c=a^*\vee b\vee c=a\rightarrow(b\vee c)=a\rightarrow(c\vee b)=(a\rightarrow c)\vee b$.
		\item We have
		\begin{align*}
			a\rightarrow(b\rightarrow c) & =a^*\vee(b^*\vee c)=(a^*\vee c)\vee(b^*\vee c)=(a\rightarrow c)\vee(b\rightarrow c)= \\
			                             & =(b\rightarrow c)\vee(a\rightarrow c)=b\rightarrow(a\rightarrow c).
		\end{align*}
		\item Since $a\le b\rightarrow a$, (iv) follows from (i).
		\item If $a\le b$ then $c\rightarrow a=c^*\vee a\le c^*\vee b=c\rightarrow b$ and $b\rightarrow c=b^*\vee c\le a^*\vee c=a\rightarrow c$.
	\end{enumerate}
\end{proof}

If $\mathbf L$ is, moreover, distributive, we can prove even the following.

\begin{lemma}\label{lem1}
	Let $\mathbf L=(L,\vee,\wedge,{}^*,0,1)$ be a distributive pseudocomplemented lattice and $a,b,c\in L$. Then the following holds:
	\begin{enumerate}[{\rm(i)}]
		\item $a\wedge(a\rightarrow b)\le b$,
		\item $a\le b\rightarrow c$ implies $a\wedge b\le c$,
		\item if $a\rightarrow b=1$ then $a\le b$.
	\end{enumerate}
\end{lemma}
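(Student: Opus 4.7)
The plan is to prove (i) first by a direct distributivity calculation and then derive (ii) and (iii) as easy consequences of (i).

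For (i), I would simply expand the definition of $\rightarrow$ and distribute: $a\wedge(a\rightarrow b)=a\wedge(a^*\vee b)=(a\wedge a^*)\vee(a\wedge b)=0\vee(a\wedge b)=a\wedge b\le b$. This uses only distributivity and the defining property $a\wedge a^*=0$ of the pseudocomplement, so no additional hypothesis beyond what is given is needed.

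For (ii), I would reduce to (i). Assume $a\le b\rightarrow c$. Meeting both sides with $b$ and using part (i) with $b,c$ in place of $a,b$, one gets $a\wedge b\le(b\rightarrow c)\wedge b\le c$, which is the desired conclusion.

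For (iii), the cleanest route is again to invoke (i): if $a\rightarrow b=1$, then $a=a\wedge 1=a\wedge(a\rightarrow b)\le b$.

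I do not expect any real obstacle here; the only subtlety is noticing that distributivity is genuinely used in (i) (it fails for example in the non-distributive pseudocomplemented lattice of Figure~1(a), as can be checked on the computed table), and that (ii) and (iii) are logical consequences of (i) together with the monotonicity of $\wedge$, without requiring further computation in $\mathbf L$.
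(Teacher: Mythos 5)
Your proposal is correct. Parts (i) and (ii) coincide with the paper's argument: the same distributivity expansion $a\wedge(a^*\vee b)=(a\wedge a^*)\vee(a\wedge b)$ for (i), and for (ii) the paper carries out exactly the computation you describe (it re-expands $(b\rightarrow c)\wedge b$ rather than citing (i), but that is the same step). For (iii) you take a genuinely different and shorter route: the paper proves it by a separate dual computation, $a\le a\vee b=(a\vee b)\wedge(a^*\vee b)=(a\wedge a^*)\vee b=b$, using distributivity again on the join side, whereas you simply observe $a=a\wedge 1=a\wedge(a\rightarrow b)\le b$ by (i). Your derivation is valid and arguably cleaner, since it makes (iii) a one-line corollary of Modus Ponens; the paper's version has the mild merit of being self-contained and of exhibiting (iii) as a distributivity fact in its own right, but nothing is lost by your shortcut. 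Your closing remark that distributivity is essential is also consistent with the paper, which points out that (ii) fails in the non-distributive lattice of Figure~1(a).
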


\begin{proof}
\
\begin{enumerate}[(i)]
	\item $a\wedge(a\rightarrow b)=a\wedge(a^*\vee b)=(a\wedge a^*)\vee(a\wedge b)\le b$.
	\item If $a\le b\rightarrow c$ then $a\wedge b\le(b\rightarrow c)\wedge b=(b^*\vee c)\wedge b=(b^*\wedge b)\vee(c\wedge b)\le c$.
	\item If $a\rightarrow b=1$ then $a^*\vee b=1$ thus, by distributivity,
	\[
	a\le a\vee b=(a\vee b)\wedge1=(a\vee b)\wedge(a^*\vee b)=(a\wedge a^*)\vee b=b.
	\]
\end{enumerate}
\end{proof}

Let us note that condition (i) of Lemma~\ref{lem1} can be read as follows: The truth value of proposition $y$ cannot be less than the truth value of the conjunction of the propositions $x$ and $x\rightarrow y$, which is nothing else than the Modus Ponens derivation rule.

Observe that assertion (ii) of Lemma~\ref{lem1} does not hold if $\mathbf L$ is not distributive. For example, consider the lattice $N_5$ depicted in Fig.~1 (a). Then we have $c\le1=c\rightarrow a$, but $c\wedge c=c\not\le a$.

In a Stone lattice, we can prove a modified version of adjointness between $\wedge$ and $\rightarrow$, see the following lemma.

\begin{proposition}\label{prop1}
	Let $(L,\vee,\wedge,{}^*,0,1)$ be a Stone lattice and $a,b,c\in L$. Then the following holds:
	\begin{enumerate}[{\rm(i)}]
		\item $(a^*\rightarrow b^*)\rightarrow b^*=a^*\vee b^*=(b^*\rightarrow a^*)\rightarrow a^*$,
		\item $a\wedge b^*\le c$ if and only if $a\le b^*\rightarrow c$.
	\end{enumerate}
\end{proposition}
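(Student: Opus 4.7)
For part (i), my plan is to expand the definition $x \rightarrow y = x^* \vee y$ twice. So $(a^*\rightarrow b^*)\rightarrow b^* = (a^{**} \vee b^*)^* \vee b^*$. Here the Stone lattice properties from Lemma~\ref{lem6} kick in: $(a^{**} \vee b^*)^* = a^{***} \wedge b^{**} = a^* \wedge b^{**}$, using also Lemma~\ref{lem3}(i). Then distributivity gives $(a^* \wedge b^{**}) \vee b^* = (a^* \vee b^*) \wedge (b^{**} \vee b^*)$, and the Stone identity collapses the second factor to $1$, leaving $a^* \vee b^*$. The symmetric equality $(b^*\rightarrow a^*)\rightarrow a^* = a^* \vee b^*$ follows by swapping the roles of $a$ and $b$ and using commutativity of $\vee$.

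For part (ii), the key observation is that in a Stone lattice, $b^*$ and $b^{**}$ are complementary (their meet is $0$ by pseudocomplementation, and their join is $1$ by the Stone identity), so any element can be decomposed along this pair via distributivity. For the forward direction, assuming $a \wedge b^* \le c$, I will write
\[
a = a \wedge 1 = a \wedge (b^* \vee b^{**}) = (a \wedge b^*) \vee (a \wedge b^{**}) \le c \vee b^{**} = b^* \rightarrow c.
\]
For the converse, assuming $a \le b^{**} \vee c$, I meet both sides with $b^*$ and apply distributivity:
\[
a \wedge b^* \le (b^{**} \vee c) \wedge b^* = (b^{**} \wedge b^*) \vee (c \wedge b^*) = c \wedge b^* \le c.
\]

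I do not anticipate any real obstacle: every step is a direct application of distributivity, the definition of $\rightarrow$, or an identity from Lemma~\ref{lem3}/Lemma~\ref{lem6}. The only mild subtlety is recognizing that the adjointness failure noted after Lemma~\ref{lem1} (which prevented a general $a \wedge b \le c \iff a \le b \rightarrow c$) is repaired here precisely because $b^*$ has $b^{**}$ as an honest complement in a Stone lattice, which is exactly what powers the decomposition of $a$ used above.
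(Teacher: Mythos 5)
Your proof is correct and follows essentially the same route as the paper: the same expansion and distributive regrouping in (i), the same Stone-identity decomposition for the forward direction of (ii), and for the converse you simply inline the computation that the paper delegates to Lemma~\ref{lem1}(ii). One cosmetic point: the identity $(a^{**}\vee b^*)^*=a^{***}\wedge b^{**}$ is Lemma~\ref{lem3}(iii), valid in any pseudocomplemented lattice, so citing Lemma~\ref{lem6} there is a misattribution (the Stone hypothesis is genuinely needed only for $b^*\vee b^{**}=1$).
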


\begin{proof}
	\
	\begin{enumerate}[(i)]
		\item According to Lemma~\ref{lem3} we have
\begin{align*}
(a^*\rightarrow b^*)\rightarrow b^* & =(a^{**}\vee b^*)^*\vee b^*=(a^*\wedge b^{**})\vee b^*=(a^*\vee b^*)\wedge(b^{**}\vee b^*)= \\
                                    & =a^*\vee b^*=b^*\vee a^*=(b^*\rightarrow a^*)\rightarrow a^*.
\end{align*}
		\item If $a\wedge b^*\le c$ then
	\[
	a\le(b^{**}\vee a)\wedge(b^{**}\vee b^*)=b^{**}\vee(a\wedge b^*)\le b^{**}\vee c=b^*\rightarrow c.
	\]
	The converse implication follows from Lemma~\ref{lem1}.
	\end{enumerate}
\end{proof}

We can characterize Stone lattices by using properties of the implication $\rightarrow$.

\begin{theorem}
	Let $\mathbf L_1=(L,\vee,\wedge,\rightarrow,0,1)$ be an algebra of type $(2,2,2,0,0)$ such that $(L,\vee,\wedge,0,1)$ is a bounded distributive lattice and put $x^*:=x\rightarrow0$ for all $x\in L$. Then $\mathbf L_2:=(L,\vee,\wedge,{}^*,0,1)$ is a Stone lattice if and only if $\mathbf L_1$ satisfies the identities
	\begin{enumerate}[{\rm(1)}]
		\item $x\wedge(0\rightarrow0)\approx x$,
		\item $x\wedge(x\rightarrow0)\approx0$,
		\item $x\wedge\big((x\wedge y)\rightarrow0\big)\approx x\wedge(y\rightarrow0)$,
		\item $(x\rightarrow0)\vee\big((x\rightarrow0)\rightarrow0\big)\approx1$.
	\end{enumerate}
\end{theorem}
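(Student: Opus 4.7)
The plan is to prove both directions by a direct translation between the identities (1)--(4) and the defining properties of a Stone lattice, using the abbreviation $x^* = x\rightarrow 0$ throughout. Nothing in (1)--(4) constrains $x\rightarrow y$ for $y\ne 0$, so the whole theorem really asserts that the unary term $x\rightarrow 0$ is a Stone pseudocomplementation. This observation keeps the bookkeeping short.

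For the forward direction, assume $\mathbf L_2$ is a Stone lattice. Then $x^*$ is the pseudocomplement of $x$ in the usual sense, so $0^*=1$ (Lemma~\ref{lem3}(i)), which means $0\rightarrow 0=1$ and hence (1) collapses to $x\wedge 1=x$. Identity (2) is literally the defining property $x\wedge x^*=0$, and (4) is exactly the Stone identity $x^*\vee x^{**}\approx 1$. The only identity requiring computation is (3), which reads $x\wedge(x\wedge y)^*=x\wedge y^*$; by Lemma~\ref{lem6}(ii) we may rewrite $(x\wedge y)^*=x^*\vee y^*$, and then distributivity together with (2) gives
\[
x\wedge(x^*\vee y^*)=(x\wedge x^*)\vee(x\wedge y^*)=x\wedge y^*.
\]

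For the converse, assume (1)--(4) hold. Specialising (1) to $x=1$ yields $0\rightarrow 0=1$, i.e.\ $0^*=1$. Identity (2) gives one half of the pseudocomplement characterisation, namely $x\wedge x^*=0$. The other half, $x\wedge y=0\Rightarrow y\le x^*$, is where identity (3) earns its keep: applying (3) with $x$ and $y$ interchanged gives $y\wedge(y\wedge x)^*=y\wedge x^*$, and when $y\wedge x=0$ the left-hand side reduces via $0^*=1$ and (1) to $y\wedge 1=y$, so $y=y\wedge x^*\le x^*$. Thus $x^*$ is the pseudocomplement of $x$, and since $(L,\vee,\wedge,0,1)$ is bounded distributive by hypothesis, $\mathbf L_2$ is a distributive pseudocomplemented lattice. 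Identity (4) is then the Stone identity itself, finishing the proof.

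The only point of delicacy is the pseudocomplement half $x\wedge y=0\Rightarrow y\le x^*$: it does not come from (2) alone, and one must exploit the symmetry of $\wedge$ to reapply (3) with the roles of $x$ and $y$ swapped, together with the earlier derivation $0\rightarrow 0=1$ from (1). Everything else is a routine unpacking of the identities in light of Lemmas~\ref{lem3} and \ref{lem6}.
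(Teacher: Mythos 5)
Your proof is correct and follows essentially the same route as the paper: the forward direction verifies (1)--(4) by direct computation using Lemma~\ref{lem6}(ii) and distributivity, and the converse uses (1) and (3) (with the roles of the variables swapped) to show $x\wedge y=0$ implies $y\le x^*$, uses (2) for the reverse implication, and reads (4) as the Stone identity. The only cosmetic difference is that you explicitly extract $0\rightarrow0=1$ from (1) at $x=1$, whereas the paper uses (1) directly in the form $b\wedge(0\rightarrow0)=b$.
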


\begin{proof}
	Let $a,b\in L$. If $\mathbf L_2$ is a Stone lattice then
	\begin{enumerate}[(1)]
		\item $x\wedge(0\rightarrow0)\approx x\wedge0^*\approx x\wedge1\approx x$,
		\item $x\wedge(x\rightarrow0)\approx x\wedge x^*\approx0$,
		\item Using (ii) of Lemma~\ref{lem6} we obtain $x\wedge\big((x\wedge y)\rightarrow0\big)\approx x\wedge(x\wedge y)^*\approx x\wedge(x^*\vee y^*)\approx(x\wedge x^*)\vee(x\wedge y^*)\approx x\wedge y^*\approx x\wedge(y\rightarrow0)$,
		\item $(x\rightarrow0)\vee\big((x\rightarrow0)\rightarrow0\big)\approx x^*\vee x^{**}\approx1$,
	\end{enumerate}
	i.e.\ $\mathbf L_1$ satisfies identities (1) -- (4). Conversely, assume $\mathbf L_1$ to satisfy identities (1) -- (4). If $a\wedge b=0$ the according to (1) and (3) we have
	\[
	b=b\wedge(0\rightarrow0)=b\wedge\big((b\wedge a)\rightarrow0\big)=b\wedge(a\rightarrow0)\le a\rightarrow0=a^*.
	\]
	If, conversely, $b\le a^*$ then $b\le a\rightarrow0$ and according to (2) we have $a\wedge b\le a\wedge(a\rightarrow0)=0$. This shows that $a^*$ is the pseudocomplement of $a$. Finally, (4) is nothing else than the Stone identity. Altogether, we obtain that $\mathbf L_2$ is a Stone lattice.
\end{proof}

The previous theorem shows that the class of Stone lattices can be described by identities using the operation symbols $\vee$, $\wedge$, $\rightarrow$ and $0$. Hence, the class of these implication algebras forms a variety similar to that of intuitionistic logic based on Heyting algebras.

\section{Another possible implication in pseudocomplemented lattices}

We can introduce a connective implication in a pseudocomplemented lattice $(L,\vee,\wedge,{}^*,0,1)$ also in another way as follows:
\[
x\Rightarrow y:=x^*\vee y^{**}
\]
for all $x,y\in L$. Obviously,
\begin{itemize}
	\item $x\Rightarrow y\approx x\rightarrow y^{**}$,
	\item $x\Rightarrow y^*\approx x\rightarrow y^*$,
	\item $x\Rightarrow y\approx x^{**}\Rightarrow y\approx x\Rightarrow y^{**}\approx x^{**}\Rightarrow y^{**}$,
	\item $x\rightarrow y\le x\Rightarrow y$ for all $x,y\in L$,
	\item $x\Rightarrow y\approx y^*\Rightarrow x^*\approx y^*\rightarrow x^*$ ({\em contraposition law}),
	\item $x\Rightarrow0\approx x^*$.
\end{itemize}

\begin{example}
	The operation tables of $\Rightarrow$ for the lattices from Figure~1 are as follows:
	\[
	\begin{array}{l|lllll}
		\Rightarrow & 0 & a & b & c & 1 \\
		\hline
		0           & 1 & 1 & 1 & 1 & 1 \\
		a           & b & 1 & b & 1 & 1 \\
		b           & c & c & 1 & c & 1 \\
		c           & b & 1 & b & 1 & 1 \\
		1           & 0 & c & b & c & 1
	\end{array}
	\quad
	\begin{array}{l|llllll}
		\Rightarrow & 0 & a & b & c & d & 1 \\
		\hline
		0           & 1 & 1 & 1 & 1 & 1 & 1 \\
		a           & d & 1 & d & 1 & d & 1 \\
		b           & a & a & 1 & 1 & 1 & 1 \\
		c           & 0 & a & d & 1 & d & 1 \\
		d           & a & a & 1 & 1 & 1 & 1 \\
		1           & 0 & a & d & 1 & d & 1
	\end{array}
	\quad
	\begin{array}{l|lllll}
		\Rightarrow & 0 & a & b & c & 1 \\
		\hline
		0           & 1 & 1 & 1 & 1 & 1 \\
		a           & b & c & b & 1 & 1 \\
		b           & a & a & c & 1 & 1 \\
		c           & 0 & a & b & 1 & 1 \\
		1           & 0 & a & b & 1 & 1
	\end{array}
	\]
\hspace*{34mm}{\rm(a)}\hspace*{38mm}{\rm(b)}\hspace*{37mm}{\rm(c)}
\end{example}
	
\begin{remark}
	If $(L,\vee,\wedge,{}^*,0,1)$ is a pseudocomplemented lattice and $a\in L$ then one can easily check the following:
	\[
	\begin{array}{l|lllll}
		\Rightarrow & 0      & a              & a^*            & a^{**}         & 1 \\ \hline
		0           & 1      & 1              & 1              & 1              & 1 \\
		a           & a^*    & a^*\vee a^{**} & a^*            & a^*\vee a^{**} & 1 \\
		a^*         & a^{**} & a^{**}         & a^*\vee a^{**} & a^{**}         & 1 \\
		a^{**}      & a^*    & a^*\vee a^{**} & a^*            & a^*\vee a^{**} & 1 \\
		1           & 0      & a^{**}         & a^*            & a^{**}         & 1
	\end{array} 
	\]
\end{remark}

The implication $\Rightarrow$ is right monotone and left antitone, see the following results.

\begin{lemma}\label{lem9}
	Let $\mathbf L=(L,\vee,\wedge,{}^*,0,1)$ be a pseudocomplemented lattice and $a,b,c\in L$. Then the following holds:
	\begin{enumerate}[{\rm(i)}]
		\item $a\Rightarrow b\le a\Rightarrow(a\Rightarrow b)$,
		\item if $a\le b$ then $c\Rightarrow a\le c\Rightarrow b$ and $b\Rightarrow c\le a\Rightarrow c$,
		\item if $a\le b$ then $a\Rightarrow b\in D(\mathbf L)$,
		\item $(a\Rightarrow b)\Rightarrow a=a^{**}$.
	\end{enumerate}
\end{lemma}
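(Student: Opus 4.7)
The four assertions follow directly from the definition $x\Rightarrow y=x^*\vee y^{**}$ combined with the basic properties of pseudocomplementation in Lemma~\ref{lem3}. The plan is to dispose of (ii) first (since it is pure monotonicity), then derive (i) from (ii), and finally settle (iii) and (iv) by a short computation of $(a\Rightarrow b)^*$.

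For (ii), assume $a\le b$. By Lemma~\ref{lem3}(ii) applied twice we get $a^{**}\le b^{**}$ and $b^*\le a^*$. Joining with $c^*$ on the left and $c^{**}$ on the right respectively gives
\[
c\Rightarrow a=c^*\vee a^{**}\le c^*\vee b^{**}=c\Rightarrow b,\qquad b\Rightarrow c=b^*\vee c^{**}\le a^*\vee c^{**}=a\Rightarrow c.
\]

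For (i), note that $b\le b^{**}\le a^*\vee b^{**}=a\Rightarrow b$; then applying the right-monotonicity part of (ii) to the inequality $b\le a\Rightarrow b$ (with $c:=a$) yields $a\Rightarrow b\le a\Rightarrow(a\Rightarrow b)$.

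For (iii) and (iv), the key computation is
\[
(a\Rightarrow b)^*=(a^*\vee b^{**})^*=a^{**}\wedge b^{***}=a^{**}\wedge b^*,
\]
using Lemma~\ref{lem3}(iii) and the identity $b^{***}=b^*$ from Lemma~\ref{lem3}(i). For (iii), if in addition $a\le b$ then $b^*\le a^*$ by Lemma~\ref{lem3}(ii), so $a^{**}\wedge b^*\le a^{**}\wedge a^*=0$, which means $a\Rightarrow b\in D(\mathbf L)$. For (iv), we plug the same computation into the definition of $\Rightarrow$:
\[
(a\Rightarrow b)\Rightarrow a=(a\Rightarrow b)^*\vee a^{**}=(a^{**}\wedge b^*)\vee a^{**}=a^{**},
\]
by absorption. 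There is no real obstacle here; every step reduces to a one-line identity from Lemma~\ref{lem3}, and distributivity is not needed.
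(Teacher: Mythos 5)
Your proof is correct and follows essentially the same route as the paper: everything reduces to the one-line identities of Lemma~\ref{lem3}, and your computation of $(a\Rightarrow b)^*=a^{**}\wedge b^*$ for (iii) and (iv) is exactly the paper's argument for (iv). The only cosmetic differences are that you inline the monotonicity computations for (ii) and (iii) which the paper delegates to Lemma~\ref{lem4}, and you derive (i) from (ii) via $b\le a\Rightarrow b$ instead of the paper's direct calculation $a^*\vee(a^*\vee b^{**})\le a^*\vee(a^*\vee b^{**})^{**}$ --- both rest on the same fact $x\le x^{**}$.
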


\begin{proof}
	\
	\begin{enumerate}[(i)]
		\item We have $a\Rightarrow b=a^*\vee b^{**}=a^*\vee(a^*\vee b^{**})\le a^*\vee(a^*\vee b^{**})^{**}=a\Rightarrow(a\Rightarrow b)$.
		\item This follows from Lemma~\ref{lem4}.
		\item This follows from Lemma~\ref{lem4}.
		\item According to Lemma~\ref{lem3} we have
\[
(a\Rightarrow b)\Rightarrow a=(a^*\vee b^{**})^*\vee a^{**}=(a^{**}\wedge b^*)\vee a^{**}=a^{**}.
\]
	\end{enumerate}
\end{proof} 

For $\Rightarrow$ we can easily prove a stronger result than (i) of Lemma~\ref{lem4}, in a certain sense the converse of (iii) of Lemma~\ref{lem1}.

\begin{lemma}\label{lem2}
	Let $(L,\vee,\wedge,{}^*,0,1)$ be a pseudocomplemented lattice satisfying the Stone identity and $a,b\in L$. Then the following holds:
	\begin{enumerate}[{\rm(i)}]
		\item If $a\le b^{**}$ then $a\Rightarrow b=1$,
		\item $a\Rightarrow(b\Rightarrow a)=1$.
	\end{enumerate}
\end{lemma}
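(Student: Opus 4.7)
The plan is to reduce both parts to elementary manipulations of $\ast$, using the Stone identity only in part~(i).

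For part~(i), I would start from $a\Rightarrow b = a^*\vee b^{**}$ and try to bound this from below by $1$. The hypothesis $a\le b^{**}$ gives, via Lemma~\ref{lem3}(ii) together with $b^{***}=b^*$, the inequality $b^*\le a^*$, whence $a^{**}\le b^{**}$. Now the Stone identity delivers
\[
1 = a^*\vee a^{**} \le a^*\vee b^{**} = a\Rightarrow b,
\]
so $a\Rightarrow b=1$. This is the step that actually uses the Stone identity; without it the inequality $a^*\vee a^{**}=1$ would fail.

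For part~(ii), I would bootstrap from~(i). By Lemma~\ref{lem3}(i), $a\le a^{**}\le b^*\vee a^{**}=b\Rightarrow a$, and again by Lemma~\ref{lem3}(i), $b\Rightarrow a\le(b\Rightarrow a)^{**}$. Hence $a\le(b\Rightarrow a)^{**}$, and applying~(i) with $b$ replaced by $b\Rightarrow a$ yields $a\Rightarrow(b\Rightarrow a)=1$.

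I do not anticipate a real obstacle here; both items are one-line deductions once the right observation is made. The only subtle point is recognising that the hypothesis $a\le b^{**}$ in~(i) should be converted into $a^{**}\le b^{**}$ so that the Stone identity can be applied to $a$ (not to $b$); this is what makes the computation close up in a single line.
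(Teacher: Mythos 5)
Your proof is correct and follows essentially the same one-line computations as the paper. The only cosmetic differences are that in (i) the paper applies the Stone identity to $b$ directly (from $a\le b^{**}$ one gets $b^*=b^{***}\le a^*$, hence $1=b^*\vee b^{**}\le a^*\vee b^{**}$), saving your extra dualization step, and in (ii) the paper computes $1=a^*\vee b^*\vee a^{**}\le a^*\vee(b^*\vee a^{**})^{**}$ directly rather than reducing to (i); both variants are equally valid.
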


\begin{proof}
	\
	\begin{enumerate}[(i)]
		\item If $a\le b^{**}$ then $1=b^*\vee b^{**}\le a^*\vee b^{**}=a\Rightarrow b$.
		
		\item We have $1=a^*\vee b^*\vee a^{**}\le a^*\vee(b^*\vee a^{**})^{**}=a\Rightarrow(b\Rightarrow a)$.
	\end{enumerate}
\end{proof}

Since $b\le b^{**}$ for each $b\in L$, from (i) of Lemma~\ref{lem2} we obtain
\begin{enumerate}[(i')]
	\item If $a\le b$ then $a\Rightarrow b=1$.
\end{enumerate}

Further interesting properties of $\Rightarrow$ can be shown provided the lattice $\mathbf L$ is distributive.

\begin{lemma}\label{lem8}
	Let $(L,\vee,\wedge,{}^*,0,1)$ be a distributive pseudocomplemented lattice and $a,b,c\in L$. Then the following holds:
	\begin{enumerate}[{\rm(i)}]
		\item $a\wedge(a\Rightarrow b)\le b^{**}$,
		\item $a\le b\Rightarrow c$ implies $a\wedge b\le c^{**}$,
		\item if $a\rightarrow b=a\Rightarrow b$ then $a^{**}\wedge b=a^{**}\wedge b^{**}$.
	\end{enumerate}
\end{lemma}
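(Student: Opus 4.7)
The plan is to prove all three parts by straightforward manipulation, using distributivity together with the defining property $x\wedge x^*=0$ of pseudocomplementation and the identity $x\le x^{**}$ from Lemma~\ref{lem3}. No deep structural fact is needed; distributivity is what does all the work, which is why the lemma is stated for distributive pseudocomplemented lattices.

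For (i), I would simply unfold $a\Rightarrow b$ according to the definition and distribute: $a\wedge(a\Rightarrow b)=a\wedge(a^*\vee b^{**})=(a\wedge a^*)\vee(a\wedge b^{**})=a\wedge b^{**}$, which is bounded above by $b^{**}$. For (ii), I would mimic the proof of (ii) in Lemma~\ref{lem1}: assuming $a\le b\Rightarrow c=b^*\vee c^{**}$ and meeting with $b$, distributivity gives $a\wedge b\le(b^*\vee c^{**})\wedge b=(b^*\wedge b)\vee(c^{**}\wedge b)=b\wedge c^{**}\le c^{**}$.

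For (iii), the hypothesis $a\rightarrow b=a\Rightarrow b$ unfolds to the equation $a^*\vee b=a^*\vee b^{**}$. The trick is to meet both sides with $a^{**}$ so that the $a^*$-summand is annihilated. Distributivity yields
\[
(a^{**}\wedge a^*)\vee(a^{**}\wedge b)=(a^{**}\wedge a^*)\vee(a^{**}\wedge b^{**}),
\]
and since $a^{**}\wedge a^*=0$ by pseudocomplementation, this collapses to $a^{**}\wedge b=a^{**}\wedge b^{**}$, as desired.

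There is no real obstacle here; the only point to be careful about is that distributivity really is used in each of the three parts (to split meets over joins involving $a^*$ or $b^*$), so the hypothesis of the lemma cannot be weakened in an obvious way. All other ingredients are immediate from the definition of $\Rightarrow$ and from Lemma~\ref{lem3}(i).
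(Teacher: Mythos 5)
Your proof is correct and follows essentially the same route as the paper: parts (i) and (ii) are the computations behind Lemma~\ref{lem1} applied with $b^{**}$ (resp.\ $c^{**}$) in place of $b$ (the paper just cites that lemma via $a\Rightarrow b=a\rightarrow b^{**}$ instead of redoing them), and your part (iii) is the paper's chain of equalities written as ``meet the hypothesis with $a^{**}$ and distribute.'' Nothing is missing.
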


\begin{proof}
	\
	\begin{enumerate}[(i)]
		\item This follows from $a\Rightarrow b=a\rightarrow b^{**}$ and Lemma~\ref{lem1}.
		\item This follows from $b\Rightarrow c=b\rightarrow c^{**}$ and Lemma~\ref{lem1}.
		\item If $a\rightarrow b=a\Rightarrow b$ then
\begin{align*}
	a^{**}\wedge b & =(a^{**}\wedge a^*)\vee(a^{**}\wedge b)=a^{**}\wedge(a^*\vee b)=a^{**}\wedge(a\rightarrow b)=a^{**}\wedge(a\Rightarrow b)= \\
	& =a^{**}\wedge(a^*\vee b^{**})=(a^{**}\wedge a^*)\vee(a^{**}\wedge
	b^{**})=a^{**}\wedge b^{**}.
\end{align*}
	\end{enumerate}
\end{proof}

Using the previous results, we can prove some important properties of implications provided the pseudocomplemented lattice is a Stone lattice.

\begin{theorem}\label{th1}
	Let $(L,\vee,\wedge,{}^*,0,1)$ be a Stone lattice and $a,b,c\in L$. Then the following holds:
	\begin{enumerate}[{\rm(i)}]
		\item $(a\Rightarrow b)\Rightarrow b=a^{**}\vee b^{**}=(b\Rightarrow a)\Rightarrow a$ {\rm(}{\em quasi-commutativity}{\rm)},
		\item $a\wedge b^*\le c^{**}$ if and only if $a\le b^*\Rightarrow c$,
		\item $a\rightarrow b=a\Rightarrow b$ if and only if $a^{**}\wedge b=a^{**}\wedge b^{**}$,
		\item $a\Rightarrow(b\Rightarrow c)=(a\Rightarrow c)\vee(b\Rightarrow c)=b\Rightarrow(a\Rightarrow c)$ {\rm(}{\em exchange property}{\rm)},
		\item $(a\Rightarrow b^*)\vee c^*=a\Rightarrow(b^*\vee c^*)=(a\Rightarrow c^*)\vee b^*$,
		\item $a\Rightarrow b=1$ if and only if $a\le b^{**}$.
	\end{enumerate}
\end{theorem}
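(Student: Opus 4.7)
The plan is to handle parts (i)--(vi) by straightforward reductions to the definitions $x \Rightarrow y := x^* \vee y^{**}$ and $x \rightarrow y := x^* \vee y$, using distributivity, the basic pseudocomplement identities of Lemma~\ref{lem3}, the Stone consequences collected in Lemma~\ref{lem6}, and (for (ii), (iii), (vi)) the corresponding results already proved for $\rightarrow$.

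For (i), I would expand $(a \Rightarrow b) \Rightarrow b = (a^* \vee b^{**})^* \vee b^{**}$, apply Lemma~\ref{lem3}(iii) to rewrite the outer star as $(a^{**} \wedge b^*) \vee b^{**}$, and then use distributivity together with $b^* \vee b^{**} = 1$ to collapse this to $a^{**} \vee b^{**}$; the second equality follows by symmetry. Parts (iv) and (v) are pure computation: Lemma~\ref{lem6}(i) together with $x^{***} = x^*$ gives $(b^* \vee c^{**})^{**} = b^* \vee c^{**}$, whence $a \Rightarrow (b \Rightarrow c) = a^* \vee b^* \vee c^{**}$, which is manifestly symmetric in $a$ and $b$ and also equals $(a \Rightarrow c) \vee (b \Rightarrow c)$. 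Identity (v) reduces similarly once one notes $(b^*)^{**} = b^*$ and $(c^*)^{**} = c^*$, so all three expressions collapse to $a^* \vee b^* \vee c^*$.

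For (ii), the key observation is that $b^* \Rightarrow c = b^{**} \vee c^{**} = b^* \rightarrow c^{**}$, so the claim follows verbatim from Proposition~\ref{prop1}(ii) with $c$ replaced by $c^{**}$. For (vi), the backward implication is Lemma~\ref{lem2}(i); conversely, if $a \Rightarrow b = 1$ then $a \rightarrow b^{**} = a^* \vee b^{**} = 1$, and Lemma~\ref{lem1}(iii) yields $a \le b^{**}$.

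The only part requiring a non-trivial manipulation is the converse direction of (iii); the forward direction is precisely Lemma~\ref{lem8}(iii). Assuming $a^{**} \wedge b = a^{**} \wedge b^{**}$, I would show $a^* \vee b = a^* \vee b^{**}$ by chaining
\[
a^* \vee b = (a^* \vee b) \wedge (a^* \vee a^{**}) = a^* \vee (b \wedge a^{**}) = a^* \vee (b^{**} \wedge a^{**}) = (a^* \vee b^{**}) \wedge (a^* \vee a^{**}) = a^* \vee b^{**},
\]
applying distributivity and the Stone identity twice (first to bring $a^{**}$ in, then to push $a^{**}$ back out). This is the one calculation in the theorem that uses distributivity and the Stone identity in both directions, and it is the main (mild) obstacle; everything else amounts to bookkeeping with Lemmas~\ref{lem3} and \ref{lem6}.
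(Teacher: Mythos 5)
Your proposal is correct and follows essentially the same route as the paper: part (i) by the same expansion using $(a^*\vee b^{**})^*=a^{**}\wedge b^*$, distributivity and the Stone identity; part (ii) by reducing $b^*\Rightarrow c$ to $b^*\rightarrow c^{**}$ and invoking Proposition~\ref{prop1}; part (iii) by Lemma~\ref{lem8} in one direction and the same two-step distributivity/Stone computation in the other; parts (iv) and (v) by the closure of $b^*\vee c^{**}$ (resp.\ $b^*\vee c^*$) under double pseudocomplementation via Lemma~\ref{lem6}; and part (vi) from Lemmas~\ref{lem1} and~\ref{lem2}. No gaps.
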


\begin{proof}
	\
	\begin{enumerate}[(i)]
		\item According to Lemma~\ref{lem3} we have
		\begin{align*}
		(a\Rightarrow b)\Rightarrow b & =(a^*\vee b^{**})^*\vee b^{**}=(a^{**}\wedge b^*)\vee b^{**}=(a^{**}\vee b^{**})\wedge(b^*\vee b^{**})= \\
		                              & =a^{**}\vee b^{**}=b^{**}\vee a^{**}=(b\Rightarrow a)\Rightarrow a.
		\end{align*}
		\item This follows from Proposition~\ref{prop1}.
		\item If $a^{**}\wedge b=a^{**}\wedge b^{**}$ then
		\begin{align*}
			a\rightarrow b & =a^*\vee b=(a^*\vee a^{**})\wedge(a^*\vee b)=a^*\vee(a^{**}\wedge b)=a^*\vee(a^{**}\wedge b^{**})= \\
			               & =(a^*\vee a^{**})\wedge(a^*\vee b^{**})=a^*\vee
			b^{**}=a\Rightarrow b.
		\end{align*}
		The converse implication follows from Lemma~\ref{lem8}.
		\item We have
\begin{align*}
	a\Rightarrow(b\Rightarrow c) & =a^*\vee(b^*\vee c^{**})^{**}=a^*\vee(b^*\vee c^{**})=(a^*\vee c^{**})\vee(b^*\vee c^{**})= \\
	                             & =(a\Rightarrow c)\vee(b\Rightarrow c)=(b\Rightarrow c)\vee(a\Rightarrow c)=b\Rightarrow(a\Rightarrow c).
\end{align*}
\item We have
\begin{align*}
	(a\Rightarrow b^*)\vee c^* & =(a^*\vee b^*)\vee c^*=a^*\vee(b^*\vee c^*)=a^*\vee(b^*\vee c^*)^{**}=a\Rightarrow(b^*\vee c^*)= \\
	                           & =a\Rightarrow(c^*\vee b^*)=(a\Rightarrow c^*)\vee b^*.
\end{align*}
		\item One implication follows from $a\Rightarrow b=a\rightarrow b^{**}$ and Lemma~\ref{lem1} and the converse implication from Lemma~\ref{lem2}.
	\end{enumerate}
\end{proof}

The question if the implication $\Rightarrow$ is ``better'' than the implication $\rightarrow$ has not a simple answer: Although the implication $\Rightarrow$, contrary to the implication $\rightarrow$, satisfies that $x\le y$ implies $x\Rightarrow y=1$ provided the pseudocomplemented lattice satisfies the Stone identity, the implication $\Rightarrow$ does not satisfy Modus Ponens in its genuine version. On the other hand, $\Rightarrow$ satisfies the contraposition law.

\section{Deductive systems}

The concept of a deductive system was introduced for implication reducts of several propositional calculi based on lattices or semilattices, see e.g.\ \cite{CHK}. For our purposes, we define it in two ways, see Definitions~\ref{def1} and \ref{def2} below.

\begin{definition}\label{def1}
	A {\em deductive system of the first kind} of a pseudocomplemented lattice $(L,\vee,\wedge,{}^*,0,1)$ is a subset $A$ of $L$ satisfying the following conditions:
	\begin{itemize}
		\item $1\in A$,
		\item if $x\in A$, $y\in L$ and $x\rightarrow y\in A$ then $y\in A$.
	\end{itemize}
\end{definition}

Since the intersection of deductive systems of the first kind of $\mathbf L$ is again a deductive system of the first kind of $\mathbf L$, the deductive systems of the first kind of $\mathbf L$ form a complete lattice with respect to inclusion with bottom element $\{1\}$ and top element $L$.

\begin{example}
	The deductive systems of the first kind of the pseudocomplemented lattices from Fig.~1 are as follows: \\
	$\{1\}$, $\{b,1\}$, $\{a,c,1\}$, $\{0,a,b,c,1\}$ for the lattice from Fig.~1 {\rm(a)}, \\
	$\{1\}$, $\{c,1\}$, $\{d,1\}$, $\{a,c,1\}$, $\{b,d,1\}$, $\{b,c,d,1\}$, $\{0,a,b,c,d,1\}$ for the lattice from Fig.~1 {\rm(b)}, \\
	$\{1\}$, $\{a,1\}$, $\{b,1\}$, $\{c,1\}$, $\{a,c,1\}$, $\{b,c,1\}$, $\{0,a,b,c,1\}$ for the lattice from Fig.~1 {\rm(c)}.
\end{example}

\begin{lemma}\label{lem10}
	Let $\mathbf L=(L,\vee,\wedge,{}^*,0,1)$ be a distributive pseudocomplemented lattice and $F$ a filter of $\mathbf L$. Then $F$ is a deductive system of the first kind of $\mathbf L$.	
\end{lemma}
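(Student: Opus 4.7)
The plan is to verify the two defining conditions of Definition~\ref{def1} directly from the filter axioms, using distributivity only where it is essential.

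First, since $F$ is a filter of the bounded lattice $\mathbf L$, we have $1\in F$ by definition of a filter, so the first condition is immediate.

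For the second condition, I would take $x\in F$ and $y\in L$ with $x\rightarrow y\in F$, i.e.\ $x^*\vee y\in F$. Because $F$ is closed under finite meets, $x\wedge(x^*\vee y)\in F$. Now I invoke distributivity to compute
\[
x\wedge(x^*\vee y)=(x\wedge x^*)\vee(x\wedge y)=0\vee(x\wedge y)=x\wedge y,
\]
so $x\wedge y\in F$. Since $F$ is upward closed and $x\wedge y\le y$, it follows that $y\in F$, as required. (Equivalently, one can quote Lemma~\ref{lem1}(i), which already packages this Modus Ponens computation.)

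The argument is essentially a one-line calculation, and there is no real obstacle; the only point worth flagging is that distributivity is used in an essential way to collapse $x\wedge(x^*\vee y)$ to $x\wedge y$. Without distributivity the conclusion fails, as the $N_5$ remark after Lemma~\ref{lem1} already illustrates (the filter $\{c,1\}$ in Fig.~1(a) would fail the deductive system property if distributivity were not assumed in the ambient lemma).
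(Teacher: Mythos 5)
Your proof is correct and follows essentially the same route as the paper: the paper verifies $1\in F$ and then applies Lemma~\ref{lem1}(i) (i.e.\ $a\wedge(a\rightarrow b)\le b$) together with closure of $F$ under meets and upward closure, which is exactly the computation you carry out explicitly. Your closing observation that distributivity is essential (witnessed by the filter $\{c,1\}$ in the lattice of Fig.~1(a)) is accurate and consistent with the paper's remark after Lemma~\ref{lem10} that the converse fails.
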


\begin{proof}
	Clearly, $1\in F$. If $a\in F$, $b\in L$ and $a\rightarrow b\in F$ then because of $a\wedge(a\rightarrow b)\le b$ according to Lemma~\ref{lem1} we have $b\in F$.
\end{proof}

The converse of Lemma~\ref{lem10} does not hold since in the Stone lattice from Fig.~1 (b) the set $\{b,d,1\}$ is a deductive system of the first kind, but not a filter.

\begin{lemma}
	Let $\mathbf L=(L,\vee,\wedge,{}^*,0,1)$ be a pseudocomplemented lattice and $\Theta\in\Con\mathbf L$. Then $[1]\Theta$ is a deductive system of the first kind of $\mathbf L$ and $[1]\Theta$ is a sublattice of $(L,\vee,\wedge)$.
\end{lemma}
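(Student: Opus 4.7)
The plan is to verify the three requirements (membership of $1$, the Modus Ponens closure, and closure under $\vee, \wedge$) directly from the definition of a congruence of the pseudocomplemented lattice $\mathbf{L}$, which by assumption respects all four fundamental operations $\vee, \wedge, {}^*$ and the constants $0, 1$.

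First, reflexivity of $\Theta$ gives $1 \mathbin{\Theta} 1$, so $1 \in [1]\Theta$. Next, to verify the Modus Ponens-style condition of Definition~\ref{def1}, I would take $a \in [1]\Theta$, $b \in L$ and $a \rightarrow b \in [1]\Theta$, i.e.\ $a \mathbin{\Theta} 1$ and $a^* \vee b \mathbin{\Theta} 1$. Compatibility of $\Theta$ with ${}^*$ yields $a^* \mathbin{\Theta} 1^* = 0$, and then compatibility with $\vee$ gives
\[
a^* \vee b \mathbin{\Theta} 0 \vee b = b.
\]
Combining this with $a^* \vee b \mathbin{\Theta} 1$ by transitivity produces $b \mathbin{\Theta} 1$, so $b \in [1]\Theta$, as required.

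For the sublattice claim, if $a, b \in [1]\Theta$ then $a \mathbin{\Theta} 1$ and $b \mathbin{\Theta} 1$, and compatibility of $\Theta$ with $\vee$ and $\wedge$ immediately yields $a \vee b \mathbin{\Theta} 1 \vee 1 = 1$ and $a \wedge b \mathbin{\Theta} 1 \wedge 1 = 1$; thus $[1]\Theta$ is closed under both lattice operations, making it a sublattice of $(L, \vee, \wedge)$.

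There is no real obstacle here; the argument is essentially a bookkeeping exercise once one notes that $\Theta \in \Con \mathbf{L}$ means $\Theta$ is compatible with the unary operation ${}^*$ as well (the signature of $\mathbf{L}$ includes ${}^*$ as a fundamental operation). Without this compatibility the deduction $a^* \mathbin{\Theta} 0$ could fail, so it is worth emphasising this point when writing up the proof.
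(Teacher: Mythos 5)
Your proof is correct and follows essentially the same route as the paper: the paper's one-line chain $b=1^*\vee b\mathrel{\Theta}a^*\vee b=a\rightarrow b\mathrel{\Theta}1$ is exactly your argument via compatibility with ${}^*$ and $\vee$, spelled out step by step, and the sublattice closure is handled the same way. Your remark that compatibility with the unary operation ${}^*$ is the essential ingredient is a fair point of emphasis, but there is no substantive difference from the paper's proof.
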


\begin{proof}
	Clearly, $1\in[1]\Theta$. If $a\in[1]\Theta$, $b\in L$ and $a\rightarrow b\in[1]\Theta$ then
	\[
	b=1'\vee b\mathrel{\Theta}a'\vee b=a\rightarrow b\mathrel{\Theta}1.
	\]
	That $[1]\Theta$ is a subuniverse of $(L,\vee,\wedge)$ is clear.
\end{proof}

\begin{definition}\label{def2}
	A {\em deductive system of the second kind} of a pseudocomplemented lattice $(L,\vee,\wedge,{}^*,0,1)$ is a subset $A$ of $L$ satisfying the following conditions:
	\begin{itemize}
		\item $1\in A$,
		\item if $x\in A$, $y\in L$ and $x\Rightarrow y\in A$ then $y\in A$.
	\end{itemize}
\end{definition}

Since the intersection of deductive systems of the second kind of $\mathbf L$ is again a deductive system of the second kind of $\mathbf L$, the deductive systems of the second kind of $\mathbf L$ form a complete lattice with respect to inclusion with bottom element $\{1\}$ and top element $L$.

\begin{example}
	The deductive systems of the second kind of the pseudocomplemented lattices from Fig.~1 are as follows: \\
	$\{1\}$, $\{b,1\}$, $\{a,c,1\}$, $\{0,a,b,c,1\}$ for the lattice from Fig.~1 {\rm(a)}, \\
	$\{1\}$, $\{c,1\}$, $\{a,c,1\}$, $\{b,c,d,1\}$, $\{0,a,b,c,d,1\}$ for the lattice from Fig.~1 {\rm(b)}, \\
	$\{1\}$, $\{c,1\}$, $\{a,c,1\}$, $\{b,c,1\}$, $\{0,a,b,c,1\}$ for the lattice from Fig.~1 {\rm(c)}.
\end{example}

The relationship between deductive systems of the first and second kind is as follows.

\begin{lemma}\label{lem5}
	Let $\mathbf L=(L,\vee,\wedge,{}^*,0,1)$ be a pseudocomplemented lattice satisfying the Stone identity and $A$ a deductive system of the second kind of $\mathbf L$. Then the following holds:
	\begin{enumerate}[{\rm(i)}]
		\item If $a\in A$, $b\in L$ and $a\le b$ then $b\in A$,
		\item $A$ is a deductive system of the first kind of $\mathbf L$.
	\end{enumerate}
\end{lemma}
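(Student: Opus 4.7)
My plan is to handle (i) first, since (ii) will follow easily from (i) together with the known comparison $x\rightarrow y\le x\Rightarrow y$ listed just after the definition of $\Rightarrow$.

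For (i), suppose $a\in A$ and $a\le b$. Since trivially $b\le b^{**}$, we get $a\le b^{**}$, so Lemma~\ref{lem2}(i) applies and gives $a\Rightarrow b=1$. Because $1\in A$ by the first clause of Definition~\ref{def2}, we have $a\Rightarrow b\in A$; combined with $a\in A$, the closure clause of Definition~\ref{def2} yields $b\in A$. This is the upward-closure ``filter-like'' property.

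For (ii), assume $a\in A$, $b\in L$ and $a\rightarrow b\in A$; I want $b\in A$. Using the inequality $a\rightarrow b\le a\Rightarrow b$ (one of the bulleted identities in Section~4), part (i) immediately promotes $a\rightarrow b\in A$ to $a\Rightarrow b\in A$. Then applying the closure clause of Definition~\ref{def2} once more, with $x=a$ and $y=b$, gives $b\in A$, so $A$ satisfies both clauses of Definition~\ref{def1}.

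There is no real obstacle: the only ingredients needed are Lemma~\ref{lem2}(i) (which requires the Stone identity, explaining why that hypothesis appears) and the elementary pointwise inequality $x\rightarrow y\le x\Rightarrow y$. The logical structure is simply ``upward closure'' followed by a one-line implication-strengthening step, and the order (i) before (ii) is essential because (ii) invokes (i).
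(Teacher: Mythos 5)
Your proof is correct and follows essentially the same route as the paper: part (i) via the Stone identity computation $1=b^*\vee b^{**}\le a^*\vee b^{**}=a\Rightarrow b$ (which you correctly delegate to Lemma~\ref{lem2}(i)), and part (ii) via the inequality $a\rightarrow b\le a\Rightarrow b$ together with the upward closure established in (i). No gaps.
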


\begin{proof}
	\
	\begin{enumerate}[(i)]
		\item If $a\in A$, $b\in L$ and $a\le b$ then $1=b^*\vee b^{**}\le a^*\vee b^{**}=a\Rightarrow b$, i.e.\ $a\Rightarrow b=1\in A$ and hence $b\in A$.
		\item If $a\in A$, $b\in L$ and $a\rightarrow b\in A$ then because of $a\rightarrow b\le a\Rightarrow b$ we have $a\Rightarrow b\in A$ according to (i) and hence $b\in A$.
	\end{enumerate}
\end{proof}

Fig.~1 (c) shows that also in the case when $\mathbf L$ does not satisfy the Stone identity it may happen that every deductive system of the second kind of $\mathbf L$ is a deductive system of the first kind of $\mathbf L$.

For every deductive system $A$ of the second kind of a pseudocomplemented lattice $(L,\vee,\wedge,$ ${}^*,0,1)$ define a binary relation $\Theta(A)$ on $L$ by
\[
(x,y)\in\Theta(A)\text{ if and only if }x\Rightarrow y\in A\text{ and }y\Rightarrow x\in A
\]
($x,y\in L$).

\begin{theorem}
	Let $\mathbf L=(L,\vee,\wedge,{}^*,0,1)$ be a Stone lattice and $A$ a deductive system of the second kind of $\mathbf L$. Then the following holds:
	\begin{enumerate}[{\rm(i)}]
		\item $\Theta(A)$ is reflexive, symmetric and compatible with all fundamental operations of $\mathbf L$ and $[1]\big(\Theta(A)\big)=\{x\in L\mid x^{**}\in A\}$,
		\item if $A$ is closed with respect to $\wedge$ then $\Theta(A)\in\Con\mathbf L$.
	\end{enumerate}
\end{theorem}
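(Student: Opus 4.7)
The plan is to verify each required property of $\Theta(A)$ in turn, relying on the Stone identity, Lemmas~\ref{lem3} and \ref{lem6}, and crucially on the upward closure of $A$ established in Lemma~\ref{lem5}(i). Reflexivity is immediate: $x\Rightarrow x=x^*\vee x^{**}=1\in A$ by the Stone identity, while symmetry is built into the definition of $\Theta(A)$. For the description of $[1]\big(\Theta(A)\big)$, I would note that $x\Rightarrow1=x^*\vee1=1$ holds automatically, whereas $1\Rightarrow x=0\vee x^{**}=x^{**}$, so $(x,1)\in\Theta(A)$ if and only if $x^{**}\in A$.

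For compatibility, I plan to establish one-coordinate substitutivity for each fundamental operation. For the unary $^*$, the contraposition law from Section~4 gives $a^*\Rightarrow b^*=a^{**}\vee b^*=b\Rightarrow a$, so $(a,b)\in\Theta(A)$ immediately yields $(a^*,b^*)\in\Theta(A)$. For $\vee$, assuming $(a,b)\in\Theta(A)$, I would expand $(a\vee c)\Rightarrow(b\vee c)=(a^*\wedge c^*)\vee b^{**}\vee c^{**}$ using Lemma~\ref{lem6}(i), and then derive the key inequality $a^*\le(a^*\wedge c^*)\vee c^{**}$ from $a^*=a^*\wedge(c^*\vee c^{**})$ by distributing; this forces $(a\vee c)\Rightarrow(b\vee c)\ge a\Rightarrow b\in A$, whence upward closure of $A$ gives $(a\vee c)\Rightarrow(b\vee c)\in A$, and the reverse direction is symmetric. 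For $\wedge$, I would compute $(a\wedge c)\Rightarrow(b\wedge c)=a^*\vee c^*\vee(b^{**}\wedge c^{**})$ via Lemma~\ref{lem6}(ii), and absorb via $c^*\vee(b^{**}\wedge c^{**})=(c^*\vee b^{**})\wedge(c^*\vee c^{**})=c^*\vee b^{**}$ to obtain the analogous inequality.

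For part (ii), the new ingredient is transitivity. Given $(a,b),(b,c)\in\Theta(A)$ with $A$ now closed under $\wedge$, I plan to exploit the computation
\[
(a\Rightarrow b)\wedge(b\Rightarrow c)=(a^*\vee b^{**})\wedge(b^*\vee c^{**})=(a^*\wedge b^*)\vee(a^*\wedge c^{**})\vee(b^{**}\wedge c^{**}),
\]
where the cross term $b^{**}\wedge b^*=0$ drops out, and observe that the right-hand side is bounded above by $a^*\vee c^{**}=a\Rightarrow c$. Since $A$ is $\wedge$-closed, the left-hand side lies in $A$, and upward closure then yields $a\Rightarrow c\in A$; symmetrically $c\Rightarrow a\in A$. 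Together with (i), this promotes $\Theta(A)$ to a congruence of $\mathbf L$.

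I expect the main subtlety to lie in the compatibility arguments for $\vee$ and $\wedge$: each hinges on a short but non-obvious rewrite using the Stone identity $c^*\vee c^{**}=1$ together with distributivity, and each crucially relies on upward closure of $A$, a property that itself rests on the Stone identity through Lemma~\ref{lem5}(i) — so none of the hypotheses are wasted. The transitivity step in (ii) is conceptually the cleanest, once the inequality $(a\Rightarrow b)\wedge(b\Rightarrow c)\le a\Rightarrow c$ is spotted.
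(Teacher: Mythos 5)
Your proposal is correct, and part (i) follows essentially the same route as the paper: reflexivity via the Stone identity, the contraposition law for $^*$, and for $\vee$ and $\wedge$ the inequality $(a\vee c)\Rightarrow(b\vee c)\ge a\Rightarrow b$ (resp.\ $(a\wedge c)\Rightarrow(b\wedge c)\ge a\Rightarrow b$) followed by upward closure of $A$ from Lemma~\ref{lem5}(i); your rewrites using $c^*\vee c^{**}=1$ are just a reshuffling of the paper's distributivity computation. Where you genuinely diverge is the transitivity step in (ii): you prove the lattice inequality $(a\Rightarrow b)\wedge(b\Rightarrow c)\le a\Rightarrow c$ (correct --- the cross term $b^{**}\wedge b^*$ vanishes and the remaining joinands all lie below $a^*\vee c^{**}$) and then invoke upward closure of $A$, whereas the paper instead computes $\big((a\Rightarrow b)\wedge(b\Rightarrow c)\big)\Rightarrow(a\Rightarrow c)=1$ and applies the defining closure rule of a deductive system of the second kind. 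Your version is shorter and arguably more transparent, since it reduces transitivity to a purely order-theoretic fact plus Lemma~\ref{lem5}(i); the paper's version has the stylistic merit of exercising the Modus-Ponens-like inference rule that deductive systems are built to support, but logically the two are equivalent here because Lemma~\ref{lem5}(i) already encodes upward closure. One small point to keep in mind when writing this up: as in the paper, ``compatibility'' of the binary operations in part (i) is only established in one coordinate; full substitutivity $(a\vee c,b\vee d)\in\Theta(A)$ requires the transitivity supplied in part (ii), so you should match the paper in not overclaiming in (i).
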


\begin{proof}
	Let $a,b,c\in L$.
	\begin{enumerate}[(i)]
		\item Because of $a\Rightarrow a=a^*\vee a^{**}=1\in A$, the relation $\Theta(A)$ is reflexive. Clearly, $\Theta(A)$ is symmetric. Now assume $(a,b)\in\Theta(A)$. Then $a\Rightarrow b\in A$ and $b\Rightarrow a\in A$ and we have
		\begin{align*}
			(a\vee c)\Rightarrow(b\vee c) & =(a\vee c)^*\vee(b\vee c)^{**}=(a^*\wedge c^*)\vee(b^{**}\vee c^{**})= \\
	 		                                  & =(a^*\vee b^{**}\vee c^{**})\wedge(c^*\vee b^{**}\vee c^{**})=a^*\vee b^{**}\vee c^{**}\ge a^*\vee b^{**}= \\
			                                  & =a\Rightarrow b\in A, \\
			(a\wedge c)\Rightarrow(b\wedge c) & =(a\wedge c)^*\vee(b\wedge c)^{**}=(a^*\vee c^*)\vee(b^{**}\wedge c^{**})= \\
			                                  & =(a^*\vee c^*\vee b^{**})\wedge(a^*\vee c^*\vee c^{**})=a^*\vee c^*\vee b^{**}\ge a^*\vee b^{**}= \\
			                                  & =a\Rightarrow b\in A, \\
			               a^*\Rightarrow b^* & =b\Rightarrow a\in A.
		\end{align*}
		According to Lemma~\ref{lem5} (i) we obtain $(a\vee c)\Rightarrow(b\vee c),(a\wedge c)\Rightarrow(b\wedge c),a^*\Rightarrow b^*\in A$. Analogously, $(b\vee c)\Rightarrow(a\vee c),(b\wedge c)\Rightarrow(a\wedge c),b^*\Rightarrow a^*\in A$ can be proved. This shows $(a\vee c,b\vee c),(a\wedge c,b\wedge c),(a^*,b^*)\in\Theta(A)$. Finally, the following are equivalent: $a\in[1]\big(\Theta(A)\big)$; $a\Rightarrow1\in A$ and $1\Rightarrow a\in A$; $a^*\vee1^{**}\in A$ and $1^*\vee a^{**}\in A$; $1,a^{**}\in A$; $a^{**}\in A$.
		\item Assume $(a,b),(b,c)\in\Theta(A)$. Then $a\Rightarrow b\in A$ and $b\Rightarrow c\in A$ and hence $(a\Rightarrow b)\wedge(b\Rightarrow c)\in A$. Now we have
\begin{align*}
	\big((a\Rightarrow b)\wedge(b\Rightarrow c)\big)\Rightarrow(a\Rightarrow c) & =\big((a^*\vee b^{**})\wedge(b^*\vee c^{**})\big)^*\vee(a^*\vee c^{**})^{**}= \\
	& =\big((a^{**}\wedge b^*)\vee(b^{**}\wedge c^*)\big)\vee a^*\vee c^{**}= \\
	& =(a^{**}\vee b^{**}\vee a^*\vee c^{**})\wedge(a^{**}\vee c^*\vee a^*\vee c^{**})\wedge \\
	& \hspace*{5mm}\wedge(b^*\vee b^{**}\vee a^*\vee c^{**})\wedge(b^*\vee c^*\vee a^*\vee c^{**})= \\
	& =1\wedge1\wedge1\wedge1=1\in A.
\end{align*}
Since $A$ is a deductive system of the second kind of $\mathbf L$ we obtain $a\Rightarrow c\in A$. Analogously, $c\Rightarrow a\in A$ can be proved. This shows $(a,c)\in\Theta(A)$ completing the proof of transitivity of $\Theta(A)$.
	\end{enumerate}
\end{proof}

\section{Concluding remark}

The goal of our paper was to introduce a connective implication in pseudocomplemented lattices as a term operation. We succeeded it in two different ways. The question is if such implications satisfy properties required for implication in classical or many-valued logics. It is well-known (see e.g.\ \cite A) that the implication in the so-called implication algebras derived from Boolean algebras are fully determined by the following three axioms:
\begin{enumerate}[(a)]
	\item $(x\rightarrow y)\rightarrow x\approx x$ (contraction property),
	\item $(x\rightarrow y)\rightarrow y\approx(y\rightarrow x)\rightarrow x$ (quasi-commutativity),
	\item $x\rightarrow(y\rightarrow z)\approx y\rightarrow(x\rightarrow z)$ (exchange property),
\end{enumerate}
see e.g.\ \cite A. What we get in our study is the following. If the pseudocomplemented lattice in question is a Stone lattice, then the implication $\Rightarrow$ satisfies quasi-commutativity, the exchange property and the identity
\[
(x\Rightarrow y)\Rightarrow x\approx x^{**},
\]
see e.g.\ Theorem~\ref{th1} (i) and (iv) and Lemma~\ref{lem9} (iv). Thus our implication $\Rightarrow$ differs only in the point (a) where, however, $x^{**}\ge x$.

Moreover, we can compare our implication $\Rightarrow$ with that derived in many-valued Lukasiewicz logic, see e.g.\ \cite{CHK}. Here the implication is fully determined by quasi-commutativity, the exchange property and the identities
\[
x\rightarrow1\approx1\text{ and }1\rightarrow x\approx x.
\]
As mentioned above, $\Rightarrow$ satisfies quasi-commutativity, the exchange property and the identity $x\Rightarrow1\approx1$, thus it differs only in the last axiom where we have the identity
\[
1\Rightarrow x\approx x^{**}
\]
with $x^{**}\ge x$. We hope that this justifies our investigation of implications derived by a pseudocomplemented lattices, in particular of Stone ones.

Authors' addresses:

Ivan Chajda \\
Palack\'y University Olomouc \\
Faculty of Science \\
Department of Algebra and Geometry \\
17.\ listopadu 12 \\
771 46 Olomouc \\
Czech Republic \\
ivan.chajda@upol.cz

Helmut L\"anger \\
TU Wien \\
Faculty of Mathematics and Geoinformation \\
Institute of Discrete Mathematics and Geometry \\
Wiedner Hauptstra\ss e 8--10 \\
1040 Vienna \\
Austria, and \\
Palack\'y University Olomouc \\
Faculty of Science \\
Department of Algebra and Geometry \\
17.\ listopadu 12 \\
771 46 Olomouc \\
Czech Republic \\
helmut.laenger@tuwien.ac.at

\end{document}